\newtheorem{thm}{Theorem}[section]
\newtheorem{ho}[thm]{Hope}
\theoremstyle{definition}
\newtheorem{dfn}[thm]{Definition}
\newtheorem{rem}[thm]{Remark}
\newtheorem{exa}[thm]{Example}
\DeclareMathOperator{\rk}{rk}
\DeclareMathOperator{\ev}{ev}
\DeclareMathOperator{\Identity}{I}
\DeclareMathOperator{\Vol}{Vol}
\DeclareMathOperator{\NE}{NE}
\DeclareMathOperator{\NA}{Amp}
\DeclareMathOperator{\Lie}{Lie}
\DeclareMathOperator{\Interior}{Int}
\DeclareMathOperator{\gkz}{gkz}
\DeclareMathOperator{\rf}{rf}
\DeclareMathOperator{\ord}{ord}
\DeclareMathOperator{\Lat}{Lattice}
\DeclareMathOperator{\Newt}{Newt}
\DeclareMathOperator{\Ker}{Ker}
\DeclareMathOperator{\Aut}{Aut}
\DeclareMathOperator{\Pic}{Pic}
\DeclareMathOperator{\Hom}{Hom}
\DeclareMathOperator{\sheafHom}{\underline{\mathit{Hom}}}
\DeclareMathOperator{\End}{End}
\DeclareMathOperator{\Spec}{Spec}
\DeclareMathOperator{\Sol}{Sol}
\newcommand{\oo}{\mathcal{O}}
\newcommand{\QQ}{\mathbb{Q}}
\newcommand{\RR}{\mathbb{R}}
\newcommand{\VV}{\mathbb{V}}
\newcommand{\PP}{\mathbb{P}}
\newcommand{\CC}{\mathbb{C}}
\newcommand{\TT}{\mathbb{T}}
\newcommand{\ZZ}{\mathbb{Z}}
\newcommand{\FF}{\mathbb{F}}
\newcommand{\Cstar}{\CC^\times}
\newcommand{\Cstarn}{(\CC^\times)^n}
\newcommand{\Cstarb}{(\CC^\times)^b}
\newcommand{\Cstarr}{(\CC^\times)^r}
\newcommand{\tti}{\mathtt{i}}
\title{Mirror Symmetry and Fano Manifolds}
\author[Tom Coates, Alessio Corti, Sergey Galkin, Vasily Golyshev,
Alexander Kasprzyk]{Tom Coates, Alessio Corti, Sergey Galkin, Vasily Golyshev,
Alexander Kasprzyk}
\begin{document}

\begin{classification}
Primary 14J33; Secondary 14J45, 14D07
\end{classification}

\begin{keywords}
  Fano manifolds, mirror symmetry, variations of Hodge structure,
  Landau--Ginzburg models.
\end{keywords}

\maketitle

\bibliographystyle{halpha}
\tableofcontents

\section{Introduction}
\label{sec:intro}

We give a sketch of mirror symmetry for Fano manifolds and we outline
a program to classify Fano \mbox{4-folds} using mirror symmetry. As
motivation, we describe how one can recover the classification of Fano
\mbox{3-folds} from the study of their mirrors. A glance at the table
of contents will give a good idea of the topics covered.  We take a
stripped-down view of mirror symmetry that originated in the work of
Golyshev~\cite{MR2306141} and that can also be found
in~\cite{MR2412270}.

\section{Local systems}
\label{sec:local-systems}

A \emph{local system} of rank $r$ on a (topological) manifold
$B$ is a locally constant sheaf $\VV$ of $r$-dimensional $\QQ$-vector
spaces. To give a local system is equivalent to give its monodromy
representation $\rho \colon \pi_1(B,x)\to \Aut \VV_x \cong GL_r (\QQ)$
where $x\in B$.  We write $r=\rk \VV$.  The central theme of this note
is the detailed comparison of two different ways that local systems
arise in mathematics.

All local systems in this note: (a) support---at least
conjecturally---an additional structure such as a (polarised)
variation of (pure) Hodge structure, or a structure of an $l$-adic
sheaf over a base $B$ defined over a number field. \footnote{It is natural to imagine
  that local systems with additional structures (realisations) subject
  to natural comparisons would be the object of a category of
  ``quantum'' motivic sheaves with a construction akin to
  \cite{MR1012168,MR1043451}, see \cite{MR2483750}. It is interesting
  to wonder what a Grothendieck-style definition of such a category
  might look like, and what it might mean.}; and (b) have an integral
structure, for instance they are local systems of free
$\ZZ$-modules. In particular we assume throughout that $\VV$ is
\emph{polarised}, i.e.~that it carries a nondegenerate symmetric or
antisymmetric bilinear form $\psi \colon \VV\otimes \VV \to \QQ$.

Let $C$ be a compact topological surface, $S\subset C$ a finite set,
and $\VV$ a local system on $U=C\setminus S$. We denote by $x\in U$ a
point and by $j\colon U=C\setminus S\hookrightarrow C$ the natural
(open) inclusion. If $s\in S$ and $\gamma_s \in \pi_1(U, x)$ is a loop
around $s$, then we write $T_s=\rho (\gamma_s)\in \Aut \VV_x$ for the
monodromy transformation; $T_s$ is defined only up to conjugation, but
this will be unimportant in what follows.

\begin{dfn}
  \label{dfn:1}
  The \emph{ramification} of $\VV$ is:
  \[
  \rf \VV = \sum_{s\in S} \dim (\VV_x/\VV_x^{T_s}) \,.
  \]
\end{dfn}

If $\VV$ as above is a local system on $U=C\setminus S$, and the genus
of $C$ is $g$, then, by Euler--Poincar\'e, $\rf \VV
+(2g-2)\rk\VV=-\chi(C, j_\star \VV)$.  If $\VV$ is nontrivial
irreducible, then $H^0(C, j_\star \VV)=\VV_x^{\pi_1(U, x)}=(0)$ and,
dually, also $H^2(C,j_\star \VV)=(0)$. Thus, if $C=\PP^1$ and $\VV$ is
nontrivial irreducible, then:
\[
\rf \VV - 2\rk \VV=-\chi(\PP^1;j_\star \VV)=h^1(\PP^1; j_\star
\VV)\geq 0\, .
\]
We call the quantity $\rf \VV - 2\rk \VV$ the \emph{ramification
  index} of $\VV$. Even from a purely topological perspective, local
systems with ramification index zero seem special. As far as we know,
to date there has been no systematic study of $l$-adic sheaves on
$\PP^1$ of ramification zero.

\section{Local systems from Laurent polynomials}
\label{sec:local-systems-from}

Local systems arise classically in algebraic geometry as the
cohomology groups of the fibers of a morphism $f\colon X \to B$.

\subsection*{The classical period of a Laurent polynomial}
\label{sec:classical-period}

We discuss the special case where $f\colon \Cstarn \to \CC$ is a
Laurent polynomial in $n$ variables, that is, an element of the
polynomial ring $\CC[x_1,x_1^{-1},\dots, x_n, x_n^{-1}]$ where
$x_1,\dots, x_n$ are the standard co-ordinates on $\Cstarn$.

\begin{dfn}
  \label{dfn:5}
  Let $f\colon \Cstarn \to \CC$ be a Laurent polynomial.  The
  \emph{classical period} of $f$ is:
  \[
  \pi_f(t)=\Bigl(\frac{1}{2\pi \tti}\Bigr)^n \int_{|x_1|=\cdots =|x_n|=1}
  \frac1{1-tf(x_1,\dots, x_n)}
  \frac{dx_1}{x_1}\cdots \frac{dx_n}{x_n}
  \]
\end{dfn}

\begin{thm}
  \label{thm:1}
  The classical period satisfies an ordinary differential equation
  $L\cdot \pi_f(t)\equiv 0$, where $L \in \CC\langle t, D\rangle$ is a
  polynomial differential operator and $D=t \frac{d}{dt}$.
\end{thm}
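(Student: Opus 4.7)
The plan is to interpret $\pi_f(t)$ as a period of an algebraic family of varieties and invoke the classical fact that such periods are annihilated by Picard--Fuchs operators. As a first step I would expand $\frac{1}{1-tf}$ as a geometric series, valid for $|t|$ sufficiently small, and integrate term-by-term; uniform convergence on the compact real torus $\{|x_1|=\cdots=|x_n|=1\}$ justifies the interchange of sum and integral, giving
\[
\pi_f(t) = \sum_{d \geq 0} c_d\, t^d, \qquad c_d = \coeff_{\mathbf{1}}(f^d),
\]
the generating series of the constant terms of the powers of $f$. In particular $\pi_f$ is holomorphic at $t=0$, so asking whether it is annihilated by some $L \in \CC\langle t, D\rangle$ is a meaningful question.

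Next I would identify $\pi_f$ as a period of the one-parameter pencil of affine hypersurfaces $Z_t = \{1 - tf = 0\} \subset \Cstarn$: applying the Poincar\'e residue to the meromorphic form $\frac{1}{1-tf}\frac{dx_1}{x_1}\wedge\cdots\wedge\frac{dx_n}{x_n}$ along $Z_t$ produces a holomorphic form $\omega_t$ on $Z_t$, and a standard residue argument identifies the integral over the real torus with the integral of $\omega_t$ along a vanishing cycle in $Z_t$. The key general input is then the classical finiteness of Gauss--Manin cohomology (Griffiths): on a suitable compactification, the relative de Rham cohomology of an algebraic pencil is a coherent sheaf on the base of finite generic rank, equipped with a flat connection of regular singular type. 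Consequently the $\CC(t)$-span of $\{D^k \pi_f\}_{k \geq 0}$ is finite-dimensional, so the annihilator of $\pi_f$ in $\CC\langle t, D\rangle$ is a nonzero left ideal, and any generator of this ideal provides the desired operator $L$.

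The main obstacle is the noncompactness of the total space of the pencil, $\{(t,x) : 1-tf(x)=0\} \subset \CC \times \Cstarn$, which must be handled before the Gauss--Manin finiteness theorem can be invoked. The cleanest route is to compactify $\Cstarn$ by a smooth projective toric variety $X$ built from the Newton polytope of $f$, so that $f$ extends to a morphism on $X$, and to work with the relative cohomology of the pair $(X, Z_t \cup (X \setminus \Cstarn))$; Deligne's mixed Hodge theory then supplies the required coherence. An entirely elementary alternative, avoiding algebraic geometry altogether, is available: $\pi_f(t)$ is the diagonal of the rational function $\frac{1}{1-tf(x_1,\dots,x_n)}$ in the $n+1$ variables $(t, x_1, \dots, x_n)$, and a theorem of Lipshitz (building on work of Christol) asserts that diagonals of rational functions are holonomic, producing $L$ at once.
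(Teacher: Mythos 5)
Your argument is correct, but it reaches the conclusion by a genuinely different road than the paper. The paper's proof specialises: it regards $\pi_f$ as a restriction of the period $\Phi_g$ of the \emph{generic} Laurent polynomial $g$ with the same Newton polytope $P$, which satisfies the GKZ hypergeometric system $\gkz(A,\mathbf{c})$ attached to the lattice points of $P$; setting the coefficients $u_i=a_i$ and substituting $t=-1/u_0$ then produces the operator $L$. Your two routes --- Gauss--Manin finiteness for the pencil $Z_t=\{1-tf=0\}$, or the Lipshitz/Christol theorem that diagonals of rational functions are D-finite --- both establish existence of $L$, and the second is genuinely more elementary. What the GKZ route buys, and what your argument does not give, is the effective bound $\ord L_f\le \Vol P$ recorded immediately after the theorem; this matters for the paper's computational program, where $L_f$ is guessed from finitely many terms of the period sequence and an a priori bound on its order and degree is what makes the search terminate. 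Two small inaccuracies in your write-up, neither fatal: the torus $\{|x_1|=\cdots=|x_n|=1\}$ is \emph{not} a tube around a vanishing cycle of $Z_t$ (its image in $H_n(\Cstarn)\cong\ZZ$ is the generator, so it does not lie in the image of the tube map --- indeed $\pi_f(0)=1\neq 0$); all you actually need is that $\frac{1}{1-tf}\frac{dx_1}{x_1}\wedge\cdots\wedge\frac{dx_n}{x_n}$ defines a section of the finite-rank Gauss--Manin bundle of the complement $\Cstarn\setminus Z_t$ over the $t$-line, which already gives the linear dependence of the $D^k\pi_f$ over $\CC(t)$. Likewise $\pi_f$ is the constant-term series of $\frac{1}{1-tf}$ rather than literally its diagonal; one reduces to the diagonal theorem by a standard monomial substitution, and you should say so.
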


\begin{proof} 
  In short: our period $\pi_f (t)$ is a specialisation of integrals
  which are solutions of the differential systems introduced in
  \cite{MR1011353}, for which we recommend the
  survey~\cite{MR2306158}. We next explain this in greater detail. Let
  $P\subset \ZZ^n$ be the Newton polytope of $f$ and denote by
  $m_0,\dots, m_N\in P\cap \ZZ^n$ the lattice points in $P$.  If $P$
  does not contain the origin then the classical period is constant
  and there is nothing to prove, so we assume that $m_0 = 0$.  Write:
  \[
  f=\sum_{i=0}^N a_i x^{m_i}
  \]
  Reparametrizing $t$ if necessary, we reduce to the case where
  $a_0=0$.  Denote by $\iota\colon \ZZ^n \hookrightarrow \ZZ^{n+1}$
  the affine embedding ``at height $1$'': $\iota (m)=(1,m)$. Write
  $\mathbf{m}_i=\iota(m_i)$, $0 \leq i \leq N$, and let $A\colon
  \ZZ^{N+1}\to \ZZ^{n+1}$ be the homomorphism that maps the standard
  basis vector $\mathbf{e}_i$ to $\mathbf{m}_i$, $0 \leq i \leq
  N$. If:
  \[
  g=\sum_{i=0}^N u_i x^{m_i}
  \]
  is the generic Laurent polynomial with Newton polytope $P$, then it
  is well-known \cite{MR1269718, MR2306158} that the period:
  \[
  \Phi_g(u_0,\dots, u_N)=\Bigl(\frac{1}{2\pi\tti}\Bigr)^n \int \frac{1}{g}
  \frac{dx_1}{x_1}\cdots \frac{dx_n}{x_n}
  \]
  satisfies the holonomic differential system\footnote{That is, the system of
    differential equations:
    \[
    \begin{cases}
      0 = \prod_{l_i<0} \Bigl(\frac{\partial}{\partial
        u_i}\Bigr)^{-l_i}-\prod_{l_i>0} \Bigl(\frac{\partial}{\partial
        u_i}\Bigr)^{l_i} & \text{for $\mathbf{l}=
        (l_1,\dots,l_{N+1})\in \Ker A$}\\
        0 = {-\mathbf{c}} +\mathbf{m}_0\,u_0\frac{\partial}{\partial
          u_0}+\cdots+\mathbf{m}_N \,u_N\frac{\partial}{\partial u_N}
    \end{cases}
    \] More precisely the period satisfies the \emph{extended} GKZ
    system of \cite[\S3.3]{MR1316509} or, equivalently, the
    \emph{better behaved} GKZ system of \cite{BoHo}. In the important
    case when $P$ is a reflexive polytope, the standard GKZ is the
    same as the better behaved GKZ. The rank of the local system of
    solutions of the better behaved system is always the normalised
    volume $\Vol P$.}  $\gkz (A,\mathbf{c})$ where
  $\mathbf{c}=(-1,0,\dots, 0)$ \cite[\S2.5]{MR2306158}. To get the
  operator $L$, restrict the coefficients to $u_i=a_i$ for $i>0$,
  change the variable $u_0$ to $t=-1/u_0$, and note that $\pi_f(t) =
  u_0 \, \Phi_g(u_0,a_1,\ldots,a_n)$.
\end{proof}

\begin{dfn}
  \label{dfn:6}
  The Picard--Fuchs operator $L_f \in \CC\langle t, D\rangle$ is
  the operator:
  \begin{align*}
    L_f = \sum_{j=0}^{k} p_j(t) D^j
    && p_j \in \CC[t]
  \end{align*}
  such that $L_f\cdot \pi_f \equiv 0$, where $k$ is taken to be as
  small as possible and, once $k$ is fixed, we choose $L_f$ so that
  $\deg p_k$ is as small as possible.  This defines $L_f$ uniquely up
  to multiplication by a constant.  We say that the \emph{order} $\ord
  L_f$ of $L_f$ is $k$, and the \emph{degree} $\deg L_f$ is the
  maximum of $\deg p_0, \deg p_1, \ldots, \deg p_k$.
\end{dfn}

It is clear from what we said above that $\ord L_f \leq \Vol
P$. 

\begin{rem}
   \label{rem:2}
   The local system $\Sol L_f$ is an irreducible summand of the
   polarised variation of Hodge structure $\text{gr}^W_{n-1}R^{n-1}f_!
   \, \ZZ_{\Cstarn}$. By \cite[Thm~4.5]{Deligne}, $L_f$ has regular
   singularities.
 \end{rem}

\subsection*{How to compute the Picard--Fuchs operator and the ramification}
\label{sec:how-compute-picard}

Consider the \emph{period sequence} $(c_m)_{m \geq 0}$, where
$c_m=\text{coeff}_\mathbf{1}(f^m)$.  Expanding $\pi_f(t)$ as a power
series in $t$ and applying the residue theorem $n$ times yields:
\[
\pi_f(t)=\sum_{m=0}^\infty c_m t^m 
\]
Consider a polynomial differential operator $L=\sum t^k P_k(D)$ where
$P_k(D)\in \CC[D]$ is a polynomial in $D$; then $L\cdot \pi_f \equiv
0$ is equivalent to the linear \emph{recursion relation} $\sum
P_k(m-k)c_{m-k}=0$. In practice, to compute $L_f$ one uses knowledge
of the first few terms of the period sequence and linear algebra to guess
the recursion relation; note that the computation of $c_m$, say for
$1\leq m \leq 600$, is very expensive.  Given $L_f$, one can compute
$\rf (\Sol L_f)$ algorithmically using elementary Fuchsian
theory.

\begin{exa}
  \label{exa:1}
  If $f(x,y)=x+y+x^{-1} y^{-1}$, then:
  \[
  \pi_f(t)=\sum_{m\geq 0} \frac{(3m)!}{(m!)^3}t^{3m} 
  \]
  The coefficients satisfy the recursion relation:
  \[
  m^2 c_{3m}-3(3m-1)(3m-2)c_{3m-3}=0
  \]
  and, by what we said, this is equivalent to:
  \[
  \Bigl[D^2-27t^3(D+1)(D+2)\Bigr]\pi_f=0
  \]
  Studying this ODE, one finds that the ramification defect $\rf (\Sol
  L_f)-2\rk (\Sol L_f)$ is zero.
 \end{exa}

 \begin{exa}
   \label{exa:2}
   Consider $f(x,y)=x+ xy+y+x^{-1} y^{-1}$. In this case:
   \begin{multline*}
     L_f = 8D^2 -tD -t^2(5D+8)(11D+8) - 12t^3(30D^2+78D+47) \\
     -4t^4(D+1)(103D+147) -99t^5(D+1)(D+2)
   \end{multline*}
   and the ramification defect $\rf (\Sol L_f)-2\rk (\Sol L_f) =1$.
 \end{exa}

\section{Local systems from quantum cohomology}
\label{sec:local-systems-from-1}

Local systems also arise in the study of quantum cohomology, as
solutions of the regularised quantum differential equation.  When $X$
is a Fano manifold, the space of solutions of the regularised quantum
differential equation for $X$ defines a local system on
$\PP^1\setminus S$.

\subsection*{Fano manifolds}
Recall that a complex projective manifold $X$ of complex dimension $n$
is called \emph{Fano} if the anticanonical line bundle $-K_X=\wedge^n
T_X$ is ample.  If $n=2$, $X$ is called a \emph{del~Pezzo} surface. It
is well-known that a del~Pezzo surface is isomorphic to $\PP^1\times
\PP^1$ or the blow up of $\PP^2$ in $\leq 8$ general points: thus,
there are 10 deformation families of Fano manifolds in two dimensions.
There are 105 deformation families of 3-dimensional Fano manifolds: 17
families with $b_2=1$ and 88 families with $b_2\geq 2$
\cite{MR463151,MR503430,MR1022045,MR2112566}.  We state a theorem of
Mori that plays a crucial role in what follows:

\begin{thm}
  \label{thm:3}
  Let $X$ be a Fano manifold. Denote by $\NE X\subset H_2(X;\RR)$ the
  \emph{Mori cone} of $X$: that is, the convex cone generated by
  (classes of) algebraic curves $C\subset X$. Then $\NE X$ is a
  rational polyhedral cone.
\end{thm}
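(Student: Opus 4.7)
The plan is to follow Mori's original argument via bend-and-break, producing enough rational curves of bounded anticanonical degree to conclude finite generation of $\NE X$.

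First I would reduce the problem to positive characteristic. Spread $X$ out over a finitely generated $\ZZ$-algebra $R \subset \CC$ to obtain a smooth projective scheme $\mathcal{X} \to \Spec R$ with relatively ample $-K_{\mathcal{X}/R}$, and then specialise at a maximal ideal to get a Fano manifold $X_p$ over a finite field. Since numerical classes of curves and the Mori cone behave well under such specialisation, it suffices to produce the required extremal rational curves on $X_p$.

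Next I would run bend-and-break on $X_p$. Given any non-constant morphism $f\colon C \to X_p$ from a smooth curve of genus $g$, a Riemann--Roch/deformation-theory estimate gives
\[
\dim_{[f]} \Mor(C, X_p) \geq -K_X \cdot f_\star [C] - n(g-1).
\]
Precomposing with iterated Frobenius multiplies the anticanonical degree by $p^e$ while leaving the source curve fixed, so for $e \gg 0$ the family of deformations of $F^e \circ f$ fixing two points of $C$ is positive-dimensional. The bend-and-break lemma then forces a component of the limit to contain a rational curve. A careful accounting of how the degree drops in such a degeneration yields, through any chosen point of $X_p$, a rational curve $\ell$ with $0 < -K_X \cdot \ell \leq n+1$. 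Lifting numerical data back to characteristic zero produces the analogous bound on $X$.

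I would then use these bounded rational curves to describe extremal rays. Since $-K_X$ is strictly positive on $\NE X \setminus \{0\}$, a standard supporting-function argument shows that every extremal ray $R$ of $\overline{\NE}(X)$ is spanned by the class of a rational curve $\ell$ with $-K_X \cdot \ell \leq n+1$. Because $-K_X$ is ample, curves with bounded anticanonical degree form a bounded family, hence represent only finitely many numerical classes. This forces the number of extremal rays to be finite, and each such ray is rational (being generated by an actual curve class). Therefore $\NE X = \overline{\NE}(X)$ is a rational polyhedral cone.

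The main obstacle is the bend-and-break step: producing rational curves of controlled anticanonical degree is the only genuinely non-formal ingredient, and it is precisely the point at which one must leave characteristic zero. Everything else (reduction mod $p$, boundedness of curves of bounded degree, extraction of extremal rays via supporting hyperplanes) is then essentially formal once the bounded family of rational curves is in hand.
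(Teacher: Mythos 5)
The paper does not prove this statement: it is quoted as a known theorem of Mori and used as a black box, so there is no internal argument to compare against. Your sketch is a faithful outline of Mori's original proof (reduction mod $p$, the Frobenius trick to force $\dim_{[f]}\Mor(C,X_p)\geq -K_X\cdot f_\star[C]-n(g-1)$ to grow, bend-and-break to extract rational curves with $0<-K_X\cdot\ell\leq n+1$ through every point, lifting back to characteristic zero via the Hilbert scheme, and boundedness of curves of bounded anticanonical degree), and all the quantitative claims you state are the correct ones. The only place where you understate the work is the final step: passing from ``there exist rational curves of degree $\leq n+1$ through every point'' to ``every extremal ray of $\overline{\NE}(X)$ is spanned by such a curve'' is not merely a supporting-function formality --- it is the content of the Cone Theorem and requires Mori's accumulation/discreteness argument for rays in the half-space $K_X<0$ (which, for $X$ Fano, is all of $\NE X\setminus\{0\}$, whence polyhedrality). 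With that caveat noted, your proposal is the standard and essentially the only known route to this result.
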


\subsection*{The quantum period of a Fano manifold}
\label{sec:quantum-period-1}

When $X$ is Fano, denote by $X_{0,k,m}$ the moduli space of stable
morphisms $f\colon (C,x_1,\dots,x_k) \to X$ where $C$ is a curve of
genus $0$ with $k$ marked points $x_1,\dots,x_k\in C$, and $\deg
f^\star(-K_X)=m$. This moduli space has \emph{virtual dimension}
$m-3+n+k$.  Here we are mainly interested in $X_{0,1,m}$ and the
\emph{evaluation morphism} at the marked point:
\[
\ev \colon X_{0,1,m}\to X 
\] 
Denote by $\psi$ the first Chern class of the \emph{universal
  cotangent line bundle} on $X_{0,1,m}$, that is, the relative
dualising sheaf $\omega_\pi$ of the forgetful morphism $\pi \colon
X_{0,1,m} \to X_{0,0,m}$.

\begin{dfn}
  \label{dfn:3}
  The \emph{quantum period} of $X$ is the power series $G_X
  (t)=\sum_{m \geq 0} p_m t^m$ where $p_0 = 1$, $p_1 = 0$, and
  $p_m=\int_{X_{0,1,m}}\psi^{m-2}\ev^\star (\text{pt})$ for $m \geq
  2$.  The sequence $(p_m)_{m \geq 0}$ is the \emph{quantum period
    sequence}.
\end{dfn}

\begin{thm}
  \label{thm:2}
  The quantum period of a Fano manifold $X$ satisfies a ordinary
  differential equation $Q \cdot G_X(t)\equiv 0$, where $Q \in \ZZ
  \langle t, D\rangle$ is a polynomial differential operator and $D=t
  \frac{d}{dt}$.
\end{thm}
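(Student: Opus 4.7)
The plan is to realise $G_X(t)$ as the $[\mathrm{pt}]$-component of Givental's $J$-function restricted to the anticanonical direction in quantum cohomology, and then to extract the ODE from the quantum differential equation, with the Fano hypothesis forcing polynomial coefficients. Concretely, I would introduce the cohomology-valued series
\[
\mathbf{J}(t,z) = 1 + \sum_{\beta\in \NE(X)\setminus\{0\}} t^{-K_X\cdot\beta}\, \ev_\star\Bigl(\frac{[X_{0,1,\beta}]^{\mathrm{vir}}}{z(z-\psi)}\Bigr),
\]
expand $1/(z-\psi)=\sum_k \psi^k z^{-k-1}$, and use the projection formula together with the virtual-dimension count $\dim X_{0,1,\beta}=-K_X\cdot\beta+n-2$ to verify that the $[\mathrm{pt}]$-component of $\mathbf{J}(t,z)$ collapses to $1+\sum_{m\geq 2}p_m(t/z)^m$, so that specialising $z=1$ yields $G_X(t)$ exactly.

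By the general theory of the Dubrovin connection, $\mathbf{J}(t,z)$ satisfies the quantum differential equation $zD\mathbf{J} = \bigl((-K_X)\star_{\tau(t)}\bigr)\mathbf{J}$, with $\tau(t)=(\log t)(-K_X)$. The crucial point is that this system is \emph{polynomial} in $t$: fixing a homogeneous basis $\{\phi_i\}$ of $H^*(X;\QQ)$, the $(a,b)$-entry of the matrix of $(-K_X)\star_{\tau(t)}$ is $\sum_\beta t^{-K_X\cdot\beta}\langle -K_X,\phi_b,\phi^a\rangle_{0,3,\beta}$; the virtual-dimension constraint in Gromov--Witten theory restricts the sum to $\beta$ with a single value of $-K_X\cdot\beta$ (determined by the degrees of $\phi_a$ and $\phi_b$), and the rational polyhedrality of $\NE(X)$ from Theorem \ref{thm:3} makes the set of such $\beta$ finite, so each entry is a single rational multiple of a monomial in $t$.

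Finally, setting $z=1$ produces a first-order linear system of rank $\dim H^*(X)$ with polynomial matrix in $t$; the $\QQ(t)$-subspace of $\QQ((t))$ spanned by the components of $\mathbf{J}(t,1)$ is therefore stable under $D$ and has dimension at most $\dim H^*(X)$, so $G_X,DG_X,\ldots,D^{\dim H^*(X)}G_X$ are $\QQ(t)$-linearly dependent, and clearing denominators and rescaling yields the desired $Q\in\ZZ\langle t,D\rangle$ with $Q\cdot G_X\equiv 0$. The main obstacle is the middle paragraph: verifying the monomial structure of $(-K_X)\star_{\tau(t)}$ is precisely what upgrades a generic ``scalar ODE with rational function coefficients'' conclusion to a genuinely \emph{polynomial} coefficient bound, and it depends essentially on both the Fano hypothesis and Theorem \ref{thm:3}---without them, quantum multiplication by $-K_X$ would only be a formal power series in $t$ and a polynomial $Q$ need not exist.
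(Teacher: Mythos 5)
Your proposal is correct and follows essentially the same route as the paper: realise $G_X$ as the degree-zero component of the small $J$-function restricted along the anticanonical co-character, invoke the quantum differential equation, and use the grading of quantum cohomology together with the Fano condition and the rational polyhedrality of $\NE X$ to see that the connection matrix has polynomial (indeed monomial) entries. You merely make explicit two steps the paper leaves implicit --- the degree count showing each matrix entry is a single monomial, and the passage from the first-order system to a scalar ODE via $\QQ(t)$-linear dependence of $G_X, DG_X, \dots$ --- both of which are fine.
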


\begin{proof}
  In short: our quantum period $G_X(t)$ is a specialisation of one
  component of the \emph{small $J$-function}. The result then follows
  from general properties of quantum cohomology going back to
  Dijkgraaf. We next recall the relevant facts from the theory of
  quantum cohomology\footnote{See \cite{MR2391365,MR1702284} for more
    comprehensive treatments.} and explain this in greater detail.

  In what follows we denote by $X_{0,k,\beta}$ the moduli space of
  stable morphisms of class $\beta \in \NE X \cap H_2(X,\ZZ)$.  Recall
  that the \emph{small quantum product} $a\ast b$ of even degree
  cohomology classes $a,b\in H^\bullet (X;\CC)$ is defined by the
  following formula, which is to hold for all $c\in H^\bullet(X;\CC)$:
  \[
  (a\ast b, c)=
  \sum_{\beta \in \NE X \cap H_2(X;\ZZ)}q^\beta
  \langle a,b,c\rangle_{0,3,\beta}
  \]
  where $(a,b)=\int_X a \cup b$ is the Poincar\'e pairing, $q^\beta$
  lies in the group ring $\CC[H_2(X;\ZZ)]$\footnote{In general we
    should work with the subgroup $H_2(X)^{\text{alg}}\subset H_2(X)$; here and
    in the rest of the paragraph we use the fact that if $X$ is Fano
    manifold then $H_2(X)=H_2^{\text{alg}}(X)$.}, and:
  \[
  \langle a, b, c\rangle_{0,3,\beta}=\int_{X_{0,3,\beta}}
  \ev_1^\star (a) \cup \ev_2^\star (b) \cup \ev_3^\star (c)
  \]
  is the 3-point correlator.  The structure of the small quantum
  product is equivalent to an \emph{integrable algebraic connection}
  $\nabla$ on:
  \begin{itemize}
  \item the trivial bundle with fiber the even part
    $H^\text{ev}(X;\CC)$ of $H^\bullet(X;\CC)$, over
  \item the torus $\TT=\Spec \CC[H_2(X,\ZZ)]$.
  \end{itemize}
  In other words $\TT$ is the torus with character group
  $\Hom_\text{groups}(\TT, \Cstar)=H_2(X;\ZZ)$, co-character group
  $\Hom_\text{groups}(\Cstar, \TT)=H^2(X;\ZZ)$, and group of
  $\CC$-valued points $\TT(\CC)=\Cstar \otimes H^2(X;\ZZ)$. Note
  that $\Lie \TT=H^2(X;\CC)$. The connection $\nabla$ is defined by:
  \begin{align*}
    &\nabla_X s = X\cdot s - X\ast s
    &&
    \text{where $s \colon \TT \to H^{\text{ev}}(X;\CC)$ and $X\in \Lie \TT= H^2(X;\CC)$.}
  \end{align*}
  The fact that this connection is \emph{algebraic} globally on $\TT$
  (in fact, the coefficients of the connection are polynomials)
  follows from the fact that quantum cohomology is \emph{graded} and
  that $-K_X>0$ on $\NE X$.  The fact that the connection is
  integrable (flat) is a fundamental property of quantum cohomology:
  it follows from the WDVV equations. Integrability means that the
  action of $\Lie \TT$ on $M=\{s \colon \TT \to H^\text{ev}(X;\CC)\}$
  extends to an action of the ring $D$ of differential operators on
  $\TT$: in other words $M$ is a $D$-module, called the \emph{quantum
    $D$-module}. $M$ therefore defines a $\mathcal{D}$-module
  $\mathcal{M}$, that is, a sheaf of modules $\mathcal{M}$ over
  the sheaf of differential operators $\mathcal{D}$ on $\TT$. In
  general, given a $\mathcal{D}$-module $\mathcal{M}$, one can form the local
  system $\sheafHom_{\mathcal{D}}(\mathcal{M},\oo)$ of
  \emph{solutions}\footnote{Here $\oo$ and $\mathcal{M}$ are sheaves of
    $\mathcal{D}$-modules in the analytic topology on $\TT$, and
    $\sheafHom$ is the sheaf of homomorphisms.}  of $\mathcal{M}$. Sections of
  this local system tautologically satisfy algebraic PDEs.

  Recall that the small $J$-function of $X$ is:
  \[
  J_X(q)= 1 + \sum_{\substack{\beta \in \NE X \cap H_2(X;\ZZ) \\ \beta
      \ne 0}} q^\beta J_\beta \in H^{\ev}(X;\CC)
  \]
  where $J_\beta =\ev_\star \big(\frac1{1-\psi}\big)$,
  $\ev \colon X_{0,1,\beta} \to X$ is the evaluation map at the marked point,
  and we expand $\frac1{1-\psi}$ as a power series in $\psi$.  It is
  well-known that $J_X(q)$ is a solution of the quantum
  $\mathcal{D}$-module and therefore it tautologically satisfies an
  algebraic PDE. Note that $J_X(q)$ is cohomology-valued but it makes
  sense to take its \emph{degree-zero} component $J_X^0(q)\in
  H^0(X;\CC)$; we can regard $J_X^0(q)$ as a $\CC$-valued
  function, because $H^0(X;\CC)$ is canonically generated by the identity
  class $\mathbf{1}$.

  Finally, the anticanonical class ${-K_X}\in H^2(X;\ZZ)$ is a
  co-character of $\TT$, that is, ${-K_X}$ gives a group homomorphism
  which we denote $\kappa\colon \Cstar \to \TT$. Since
  $G_X(t)=J_X^0\circ \kappa (t)$, where $t$ is the co-ordinate
  function on $\Cstar$, the discussion above makes it clear that
  $G_X(t)$ satisfies an algebraic ODE.
\end{proof}

\begin{dfn}
  \label{dfn:8} 
  The \emph{quantum differential operator} of $X$ is the operator
  $Q_X\in \ZZ\langle t, D\rangle$ of lowest order, as in
  Definition~\ref{dfn:6}, such that $Q_X \cdot G_X(t) \equiv 0$.
\end{dfn}

\subsection*{How to compute $Q_X$}
\label{sec:how-calculate-q_x}

In practice one starts by fixing a basis $\{T^a\}$ of
$H^\text{ev}(X;\ZZ)$ with $T^0=\mathbf{1}$ the identity class. Let
$M=M(t)$ be the matrix of quantum multiplication by $-K_X$ in this
basis, written as a function on $\Cstar$ by composing with $\kappa
\colon \Cstar \to \TT$. Next consider the differential equation on
$\Cstar$:
\[
\begin{cases}
D \Psi (t) = \Psi (t)M(t) \\
\Psi (0) = \Identity  
\end{cases}
\]
for $\Psi \colon \Cstar \to \End \bigl(H^\text{ev}(X;\CC) \bigr)$
a matrix. (Note: tautologically, the differential $\kappa_\star \colon
\Lie \Cstar \to \Lie \TT$ sends $D= t \, \frac{d}{dt}$ to $-K_X \in 
H^2(X; \CC)=\Lie \TT$.) Then the first
\emph{column} of $\Psi$ is $J_X\circ \kappa (t)$; the first entry of
the first column is our quantum period $G_X(t)$.

\begin{exa}
  \label{exa:3}
  Consider $X=\PP^2$ with cohomology ring $\CC[P]/P^3$, where $P$ is
  the first Chern class of $\mathcal{O}(1)$. Choose the basis $\mathbf{1},
  -K_X=3P, K_X^2=9 \, \text{pt}$ for the cohomology. The matrix of
  quantum multiplication by $-K_X$, in this basis, is:
  \[
  M=
  \begin{pmatrix}
    0 & 0 & 27 t^3 \\
    1 & 0 & 0 \\
    0 & 1 & 0
  \end{pmatrix}
  \]
  where the coefficient of $t^3$ in the upper right corner of the matrix is
  calculated as a nontrivial Gromov--Witten number:
  \[
  \langle -K_X\ast (K_X^2), \text{pt} \rangle_{0,3,[\text{line}]} =3
  \langle K_X^2,\text{pt} \rangle_{0,2,[\text{line}]}= 
  27 \langle \text{pt},\text{pt} \rangle_{0,2,[\text{line}]} = 27
  \]
  Next we consider the system:
  \[
  D(s_0, s_1, s_2)=(s_0,s_1,s_2) M
  \]
  The column $s_0$ is annihilated by the differential operator 
  $Q_X= D^3-27t^3$, and so $G_X(t)=\sum_{m=0}^\infty
  \frac{t^{3m}}{(m!)^3}$.
\end{exa}

\subsection*{Computing $G_X$ using the quantum Lefschetz theorem}
\label{sec:calc-toric-manif}

We explain how to calculate the quantum period of a Fano complete
intersection in a toric manifold using the quantum Lefschetz theorem
of Kim, Lee, and Coates--Givental.  For us, a toric variety is a GIT
quotient $X=\CC^r/\!\!/_\chi \Cstarb$ where $\Cstarb$ acts via the
composition of a group homomorphism $\rho\colon \Cstarb \to \Cstarr$
with the canonical action of $\Cstarr$ on $\CC^r$. The group
homomorphism $\rho$ is given dually by a $b\times r$ integral matrix:
\[
D=(D_1,\dots D_r) \colon \ZZ^r \to \ZZ^b
\]
that we call the \emph{weight data} of the toric variety $X$.  The
weight data alone do not determine $X$: it is necessary to choose a
\emph{stability condition}, i.e.~a $\Cstarb$-linearized line bundle
$L$ on $\CC^r$.  This choice is equivalent to the choice of a
character $\chi\in \ZZ^b$ of $\Cstarb$; denoting by $L_\chi$ the
corresponding line bundle, we have:
\[
H^0(\CC^r;L_\chi)^{\Cstarb}
=\Bigl\{ f\in \CC[x_1,\dots,x_r] : \text{$f(\lambda
\,x)=\chi(\lambda)\,f(x)$ for all $\lambda \in \Cstarb$} \Bigr\}
\]
Having made this choice, the set of stable points is:
\[
U^s(\chi) = \Bigl\{\mathbf{a} \in \CC^r : 
\text{$\exists N\gg 0$ and $\exists f \in 
H^0(\CC^r;L_\chi^{\otimes N})^{\Cstarb}$ such that $f(\mathbf{a}) \neq 0$}\Bigr\}
\]
The set of $\chi\in \ZZ^b$ for which $U^s(\chi)$ is non-empty
generates a rational polyhedral cone in $\RR^b$ equipped with a
partition into locally closed rational polyhedral chambers defined by
requiring that $U^s(\chi)$ depends only on the chamber containing
$\chi$. We always choose $\chi$ in the interior of a chamber of
maximal dimension, and then define $X=U^s(\chi)/\Cstarb$. Under the
identification $\ZZ^b=H^2(X;\ZZ)=\Pic(X)$ the chamber containing
$\chi$ is identified with the \emph{ample cone} $\NA X$; in this way
too we regard the columns $D_i$ of the weight data $D$ as elements of
$H^2(X)$. The appropriate Euler sequence shows that $-K_X=\sum_{i=1}^r
D_i$.

\begin{thm}\cite{MR1653024}
  \label{thm:4}
  Let $X$ be a toric Fano manifold. Then
  \[
  G_X(t)=\sum_{\mathbf{k}\in \ZZ^b\cap \NE X} t^{-K_X\cdot
    \mathbf{k}}\frac1{(D_1\cdot \mathbf{k})!\cdots (D_r\cdot
    \mathbf{k})!} \,.
  \]\qed
\end{thm}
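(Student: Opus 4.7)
The plan is to derive the formula from Givental's mirror theorem for toric Fano manifolds, which gives a closed combinatorial expression for the small $J$-function $J_X(q)$ in terms of the weight data. Concretely, one introduces the cohomology-valued hypergeometric series (the toric $I$-function)
\[
I_X(q,z) = \sum_{\mathbf{k}\in \NE X \cap \ZZ^b} q^{\mathbf{k}} \prod_{i=1}^r \frac{\prod_{m\leq 0}(D_i + m z)}{\prod_{m\leq D_i\cdot\mathbf{k}}(D_i + m z)},
\]
interpreted as an element of $H^{\ev}(X;\CC)[z^{\pm 1}]\llbracket q\rrbracket$. Each summand is a polynomial in $D_1,\dots,D_r$ of total cohomological degree a nonnegative multiple of $2$, and the terms for which some $D_i\cdot \mathbf{k}<0$ vanish, by the convention that $1/n!=0$ for $n<0$.

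The first step is to invoke Givental's theorem, which asserts that $I_X(q,z) = J_X(q,z)$ as cohomology-valued functions on $\TT$ (with $z$ the usual formal variable in Givental's formalism), provided the mirror map is trivial. Using $-K_X=\sum_i D_i$ and the Fano hypothesis, I would verify that every nonzero term $q^{\mathbf{k}}$ appearing in $I_X$ has $-K_X\cdot \mathbf{k}\geq 1$, and that the coefficient of $z^{-1}q^{\mathbf{k}}$ vanishes for all $\mathbf{k}\neq 0$. This ensures that $I_X(q,z)$ begins as $\mathbf{1}+O(z^{-2})$ in the Givental cone, which by the string-equation/divisor-equation characterisation of $J_X$ forces $I_X=J_X$ with no reparametrisation.

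The second step is to extract the degree-zero component in $H^{\ev}(X;\CC)$. Expanding each hypergeometric factor, the term of cohomological degree $0$ in the $\mathbf{k}$-th summand is $\prod_{i=1}^r 1/(D_i\cdot \mathbf{k})!$, so
\[
J_X^0(q) = \sum_{\mathbf{k}\in \NE X\cap \ZZ^b} q^{\mathbf{k}}\,\frac{1}{(D_1\cdot\mathbf{k})!\cdots (D_r\cdot\mathbf{k})!}.
\]
Finally, pulling back along $\kappa\colon \Cstar\to \TT$, so that $q^{\mathbf{k}}\mapsto t^{-K_X\cdot \mathbf{k}}$, and invoking $G_X(t)=J_X^0\circ\kappa(t)$ from the proof of Theorem~\ref{thm:2} yields the asserted formula.

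The principal difficulty is establishing Givental's mirror theorem itself: the identification $I_X=J_X$ requires either a torus-equivariant localisation computation on the graph space, or the twisted Gromov--Witten/quantum Lefschetz machinery of Coates--Givental. In our Fano setting, the crucial simplification is the vanishing of the mirror map, which follows because the $z^{-1}$ asymptotics of $I_X$ are pure, and this in turn is a direct consequence of the positivity $-K_X\cdot \mathbf{k}>0$ on $\NE X\setminus\{0\}$. Given that simplification, the rest of the argument is routine bookkeeping with the combinatorial formula.
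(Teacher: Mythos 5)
Your proposal is correct and follows essentially the same route as the paper, which for this statement simply cites Givental's mirror theorem \cite{MR1653024} and gives no further argument; your sketch just makes explicit the routine steps (triviality of the mirror map for smooth toric Fanos and extraction of the degree-zero component along $\kappa$) that the citation leaves implicit. One small imprecision: when some $D_i\cdot\mathbf{k}<0$ the corresponding summand of $I_X$ does not vanish outright --- it acquires a factor of the cohomology class $D_i$ from the $m=0$ term surviving in the numerator --- but its $H^0$-component does vanish and such terms enter only at order $z^{-2}$ and below, so neither the mirror-map verification nor the final formula is affected.
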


\begin{thm}
  \label{thm:5}
  Let $\FF$ be a Fano toric manifold and let $L_1, \dots, L_c$ be nef
  line bundles on $\FF$ such that $A=-(K_{\FF}+\sum_{i=1}^c L_i)\in
  \NA \FF$.  Let $X$ be a smooth complete intersection of codimension
  $c$ in $X$, defined by the equation $f_1=\cdots =f_c=0$ where
  $f_i\in H^0(\FF;L_i)$.  Let:
  \[
  F_X(t)=\sum_{\mathbf{k}\in \ZZ^b\cap \NE \FF} t^{A\cdot
    \mathbf{k}}\frac{(L_1\cdot \mathbf{k})!\cdots (L_c\cdot
    \mathbf{k})!}{(D_1\cdot \mathbf{k})!\cdots (D_r\cdot \mathbf{k})!}
  \]
  and let $a_1$ be such that $F_X = 1 + a_1 t + O(t^2)$.  Then
  $G_X(t)=\exp (-a_1t)F_X(t)$.
\end{thm}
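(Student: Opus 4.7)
The plan is to invoke the quantum Lefschetz hyperplane theorem of Kim, Lee, and Coates--Givental. The essential input is the cohomology-valued $I$-function
\[
I_X(t, z) = \sum_{\mathbf{k} \in \ZZ^b \cap \NE \FF} t^{A \cdot \mathbf{k}} \frac{\prod_{j=1}^c \prod_{m=1}^{L_j \cdot \mathbf{k}}(L_j + mz)}{\prod_{i=1}^r \prod_{m=1}^{D_i \cdot \mathbf{k}}(D_i + mz)},
\]
which, after restriction to $H^\bullet(X)$, lies on the Lagrangian cone $\mathcal{L}_X$ encoding the genus-zero Gromov--Witten theory of $X$. The nefness of the $L_j$ and the hypothesis $A \in \NA \FF$ guarantee both that the sum makes sense as a formal power series in $t$ and that $I_X(t, z) = \mathbf{1} + O(t)$; the latter normalisation is crucial in what follows.

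First I would identify $F_X(t)$ with the scalar ($H^0$) component of $I_X$ in the leading-in-$z$ expansion. A direct inspection shows that the leading $z$-coefficient of $\prod_{m=1}^{L_j\cdot \mathbf{k}}(L_j + mz)$ is $(L_j\cdot\mathbf{k})!\, z^{L_j\cdot\mathbf{k}}$, and similarly for the denominator; the resulting ratio of factorials in $t^{A\cdot\mathbf{k}}$ matches the coefficient appearing in the definition of $F_X$. This identification is compatible with restriction from $H^\bullet(\FF)$ to $H^\bullet(X)$ and with composition with the anticanonical co-character $\kappa$.

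Second I would pass from $I_X$ to the small $J$-function $J_X$ via Birkhoff factorisation on $\mathcal{L}_X$. Writing the standard decomposition $I_X(t, z) = I_0(t)\cdot J_X(\tau(t), z)$, the one-parameter specialisation along $\kappa$ causes a drastic simplification: the mirror map $\tau(t)$ shifts only along $H^{\geq 2}(X)$ and thus becomes invisible after projection to $H^0$, while the scalar prefactor $I_0(t)$ is determined by the $t$-linear term of $I_X$ and evaluates to $\exp(a_1 t)$. Projecting to $H^0$ and using Step~1 then yields
\[
G_X(t) = \frac{F_X(t)}{I_0(t)} = \exp(-a_1 t) F_X(t).
\]

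The main obstacle is this last step: accurately tracking the Birkhoff factorisation and verifying that the $H^0$-correction is exactly the exponential $\exp(a_1 t)$, with no higher-order contributions. The essential input is the combination of the Fano hypothesis $A \in \NA \FF$ (so that no $t^0$-corrections arise) and the string/divisor axioms of genus-zero Gromov--Witten theory (which force all higher-order scalar corrections to be expressible as iterates of the first). Granting these, the specialisation to $\kappa$ and the degree-zero component produces the stated identity.
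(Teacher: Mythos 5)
Your argument is essentially the paper's: the proof given there is the one-line ``combine Theorem~\ref{thm:4} with Coates--Givental quantum Lefschetz'', which is exactly the route you unpack via the $I$-function and the mirror map. One small correction to your bookkeeping: because $A\in\NA\FF$ is ample, the $z^{-1}$-coefficient of $I_X$ is exactly $a_1t\,\mathbf{1}$, so the mirror map is the purely degree-zero shift $a_1t\,\mathbf{1}$ (there is no $H^{\geq 2}$ displacement to discard), and it is the string equation applied to this $H^0$-shift, evaluated at $z=1$, that produces the factor $\exp(-a_1t)$.
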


\begin{proof}
  Combine Theorem~\ref{thm:4} with \cite{MR2276766}.
\end{proof}

\subsection*{The regularised quantum period and mirror symmetry}
\label{sec:regul-quant-peri}

  The operator $Q_X$ has a pole of order 2 (an irregular
  singularity) at $\infty$, thus it cannot directly be compared with
  $L_f$. This suggests the following definitions:

\begin{dfn}
  \label{dfn:8}
  The \emph{regularised quantum period} is the Fourier--Laplace
  transform $\widehat{G}_X(t)=\sum (m!)p_mt^m$ of the quantum period
  $G_X(t)$.  The \emph{regularised quantum differential operator} of
  $X$ is the operator $\widehat{Q}_X\in \ZZ\langle t, D\rangle$ of
  lowest order, as in Definition~\ref{dfn:6}, such that
  $\widehat{Q}_X \cdot \widehat{G}_X(t) \equiv 0$.
\end{dfn}

\begin{dfn}
  \label{dfn:4}
  The Laurent polynomial $f$ is \emph{mirror-dual} to the Fano
  manifold $X$ if $\pi_f(t)=\widehat{G}_X(t)$ or, equivalently, if
  $L_f=\widehat{Q}_X$.
\end{dfn}

With this definition a Fano manifold has infinitely many mirrors if it
has any at all. The relationship between different mirrors of
del~Pezzo surfaces is investigated in \cite{galkin10:_mutat,
  galkin12}, where it is shown that the different mirror Laurent
polynomials $f$ are related by cluster transformations, and together
define a global function on a cluster variety.

\section{Extremal local systems and extremal Laurent polynomials}
\label{sec:extr-local-syst}

Which local systems arise from the quantum cohomology of Fano
manifolds?  Golyshev first made the observation that there are
effective bounds on the ramification of the regularised quantum local
system $\VV=\Sol \widehat{Q}_X$ of a Fano manifold $X$.

\begin{dfn} \cite{Gol}
  \label{dfn:2}
  A local system $\VV$ on $C=\PP^1\setminus S$ is \emph{extremal} if
  it is irreducible, nontrivial, and $\rf \VV = 2 \rk \VV$.  A Laurent
  polynomial $f$ is extremal if the local system $\Sol L_f$ of
  \emph{solutions} of the ODE $L_f \cdot ()\equiv 0$ is extremal.  We
  write ELP for ``extremal Laurent polynomial''.
\end{dfn}

The regularised quantum local system of any 3-dimensional Fano
manifold is extremal. We believe that extremal motivic sheaves and
Laurent polynomials are interesting in their own right. It would be
nice to work out a topological classification of integral polarised
extremal local systems.

\begin{exa}
  \label{exa:3}
  Consider a semistable rational elliptic surface $f\colon X\to \CC$.
  In general $f$ has $12$ singular fibers. Beauville~\cite{MR664643}
  classified surfaces with the smallest possible number, $4$, of
  singular fibers. On each of these $X$, it is easy to find an open
  set $(\Cstar)^2 \cong U \subset X$ such that $f|_{U}$ is an extremal
  Laurent polynomial.
\end{exa}

\section{Examples in low dimensions}
\label{sec:mink-polyn-hodge}

We describe two classes of Laurent polynomials: Minkowski polynomials
(MPs) and Hodge--Tate polynomials. (For simplicity we describe these
only when the number of variables involved is $2$ or $3$.)\phantom{.}
MPs are especially nice because: (a) they are, experimentally and
conjecturally, of low ramification; and (b) any 3-dimensional Fano
manifold with very ample tangent bundle is mirror-dual to a MP.

\subsection*{The Minkowski ansatz}
\label{sec:mink-polyn}

Let $P$ be a lattice polytope.  Then $P\cap \ZZ^n$ generates an affine
lattice whose underlying lattice we denote by $\Lat (P)$.

\begin{dfn}
  \label{dfn:10} A lattice polytope $P$ is \emph{admissible} if the
  relative interior of $P$ contains no lattice points.  A lattice
  polytope $P\subset \RR^n$ is \emph{reflexive} if the following two
  conditions hold:
  \begin{itemize}
  \item[(a)] $\Interior P \cap \ZZ^n=\{\mathbf{0}\}$;
  \item[(b)] the polar polytope:
    \[
    P^\ast = \left\{f\in (\RR^n)^\ast : \text{$\langle f, v\rangle \geq
        -1$ for all  $v \in P$}\right\}
    \] 
    is a lattice polytope.
  \end{itemize}
\end{dfn}

\begin{dfn}
  \label{dfn:11}
  Let $Q\subset \RR^n$ be a lattice polytope. A \emph{lattice
    Minkowski decomposition} of $Q$ is a decomposition of $Q$ as a
  Minkowski sum $Q=R+S$ of lattice polytopes $R$, $S$ such that $\Lat
  (Q)=\Lat (R) + \Lat (S)$.
\end{dfn}

Fix a reflexive polytope $P\subset \RR^n$ of dimension $\leq 3$.  We
describe a recipe, the \emph{Minkowski ansatz}, to write down Laurent
polynomials:
\[
f=\sum_{\mathbf{m}\in P \cap \ZZ^n} a_\mathbf{m} \, x^\mathbf{m}
\]
with $\Newt (f)=P$. We need to explain how to choose the coefficients
$a_\mathbf{m}$. In all cases we take $a_{\mathbf{0}}=0$; this is a
normalisation choice that corresponds to the fact that $p_1=0$.  If
$F\subset P$ is a face of $P$, the \emph{face term} corresponding to
$F$ is the Laurent polynomial:
\[
f_F=\sum_{\mathbf{m}\in F \cap \ZZ^n} a_\mathbf{m} \, x^\mathbf{m} 
\]
If $P$ is a reflexive polygon then we just need to specify the edge
terms. If $E=[\mu, \mu+e\nu]$ is an edge of $P$, where $\nu$ is
primitive, we take the corresponding term to be
$f_E=x^\mu(1+x^\nu)^e$.  If $P$ is a reflexive 3-tope, then we treat
the edges as just said.  It remains to specify the face terms
$f_F$. First, lattice Minkowski decompose each face into irreducibles:
\[
F=F_1+\cdots +F_r
\]
We say that such a decomposition is admissible if all $F_i$ are
admissible. Assuming that each face of $P$ has an admissible
decomposition, fix such a decomposition: then we take the face term to
be: $f_F=\prod f_{F_i}$ where $f_{F_i}$ is given by putting
coefficients on the edges of $F_i$ exactly as above.  Note that the
Minkowksi ansatz can associate to a reflexive 3-tope $P$ more than one
Laurent polynomial (if one or more faces of $P$ admit more than one
admissible decomposition), or exactly one Laurent polynomial (if every
face of $P$ admits a unique admissible decomposition), or no Laurent
polynomial (if some face of $P$ admits no admissible decomposition).

\subsection*{MP in 2 variables}
\label{sec:mps-2-variables}

There are 16 reflexive polygons and each supports exactly one
MP. This gives 16 MPs but only 8 distinct (classical) period
sequences. These are the quantum period sequences of the del~Pezzo
surfaces of degree $\geq 3$, that is, of the del~Pezzo surfaces with very
ample anti-canonical bundle. The 8 period sequences are extremal with two
exceptions: the first we already met in Example~\ref{exa:2} (the
mirror of $\FF_1$), and the other is:

\begin{exa}[the mirror of $\text{dP}_7$]
  \label{exa:4}
  $f(x,y)=x+y+x^{-1}+y^{-1}+x^{-1}y^{-1}$. Here: 
  \begin{multline*}
    L_f= 7D^2+tD(31D-3)-t^2(85D^2+238D+112)-2t^3(358D^2+785D+425)\\
    -2t^4(D+1)(669D+970)-731t^5(D+1)(D+2) 
  \end{multline*}
and the ramification defect $\rf (\Sol L_f) - 2\rk (\Sol L_f)$ is
equal to $1$.
\end{exa}

\subsection*{MP in 3 variables} In 3 variables, we have
(\url{http://www.fanosearch.net}):
\begin{itemize}
\item there are 4,319 reflexive 3-topes \cite{MR1663339};
\item they have 344 distinct facets, and these have 79 lattice
  Minkowski irreducible pieces;
\item of these, the admissible ones are $A_n$-triangles for $1\leq
  n\leq 8$;
\item MPs supported on reflexive 3-topes give rise to only 165
  (classical) period sequences. They are all extremal.
\end{itemize}

\begin{exa}
  \label{exa:5}
  Consider the reflexive polytope in $\RR^3$ with vertices given by
  the columns of:
  \[
  \left(\begin{array}{rrrrrr}
      1 & 0 & 0 & -2 & -3 & -1 \\
      0 & 1 & 0 & 0 & -1 & -1 \\
      0 & 0 & 1 & -1 & -1 & 1
    \end{array}\right)
  \]
  (This is the polytope with id 519664 in the GRDB database of toric
  canonical Fano 3-folds \cite{GRDB}.)\phantom{.}  The pentagonal
  facet has two Minkowski decompositions, and hence the polytope
  supports two Minkowski polynomials:
  \begin{align*}
    & f_1 = x + y + z + 3 x^{-1} + x^{-1}y^{-1}z + x^{-2}z^{-1} + 2
    x^{-2}y^{-1} + x^{-3}y^{-1}z^{-1}  \\
    & f_2 = x + y + z + 2 x^{-1} + x^{-1}y^{-1}z + x^{-2}z^{-1} + 2
    x^{-2}y^{-1} + x^{-3}y^{-1}z^{-1} 
  \end{align*}
  The classical periods associated to $f_1$ and $f_2$ begin as:
  \begin{align*}
    & \pi_1(t) = 1 + 6t^{2} + 90t^{4} + 1860t^{6} + 44730t^{8} +
    1172556t^{10} + \cdots \\
    & \pi_2(t) = 1 + 4t^{2} + 60t^{4} + 1120t^{6} + 24220t^{8} +
    567504t^{10} + \cdots
  \end{align*}
  and the corresponding Picard--Fuchs operators are:
  \begin{multline*}
    L_1 = 144 t^{4} D^{3} + 864 t^{4} D^{2} + 1584 t^{4} D - 40 t^{2}
    D^{3} + 864 t^{4} \\- 120 t^{2} D^{2} - 128 t^{2} D + D^{3} - 48
    t^{2}
  \end{multline*}
  \begin{multline*}
    L_2 = 128 t^{4} D^{3} + 768 t^{4} D^{2} + 1408 t^{4} D + 28 t^{2}
    D^{3} + 768 t^{4} + 84 t^{2} D^{2} \\+ 88 t^{2} D - D^{3} + 32
    t^{2} 
\end{multline*}  
\end{exa}

\subsection*{Hodge--Tate polynomials}
\label{sec:not-all-elp}

Let $f$ be a Laurent polynomial in 3 variables with Newton polytope
$P$, let $F$ be a facet of $P$, and let $f_F$ be the corresponding
face term.  Let $X_F$ be the toric variety corresponding to the
polygon $F$.  The equation $f_F=0$ defines a curve in $X_F$.  If $f$
is a MP then each such curve is of genus zero, thus MPs are
Hodge--Tate in the following sense.

\begin{dfn}
  \label{dfn:7}
  A 3-variable Laurent polynomial $f$ with Newton polytope $P$ is
  \emph{Hodge--Tate} if for all facets $F\subset P$, the curve
  $f_F=0$ has geometric genus zero.
\end{dfn}

One might hope that Hodge--Tate polynomials are of low ramification. 

\begin{exa}
  \label{exa:6}
  Consider the pictured polygon.
\begin{figure}[hpbt]
  \centering
  \begin{picture}(80,80)(-40,-40)
    \multiput(-30,-30)(30,0){3}{\makebox(0,0){$\cdot$}}
    \multiput(-30,0)(30,0){3}{\makebox(0,0){$\cdot$}}
    \multiput(-30,30)(30,0){3}{\makebox(0,0){$\cdot$}}
    \put(0,-30){\makebox(0,0){$\bullet$}}
    \put(0,0){\makebox(0,0){$\bullet$}}
    \put(-30,30){\makebox(0,0){$\bullet$}}
    \put(0,30){\makebox(0,0){$\bullet$}}
    \put(30,30){\makebox(0,0){$\bullet$}}
    \put(0,-30){\line(1,2){30}}
    \put(0,-30){\line(-1,2){30}}
    \put(-30,30){\line(1,0){60}}
    \put(42,6){\makebox(0,0){$\scriptstyle (1,0)$}}
    \put(12,38){\makebox(0,0){$\scriptstyle (0,1)$}}
  \end{picture}
\end{figure}
This is one of the smallest faces for which the Minkowski ansatz has
nothing to say. Consider the Laurent polynomial with this Newton
polygon given by $f=y(x^{-1}+2+x)+y^{-1}+a$.  For generic $a$ (the
completion of) $f=0$ is a curve of geometric genus $1$; it becomes of
genus $0$ exactly when $a \in \{-4,0,4\}$. Let us take $a=4$ and use
this as a new ``puzzle piece'' for assembling a Laurent polynomial.

Consider the 3-dimensional reflexive polytope with id 547363 in the
GRDB database of toric canonical Fano 3-folds \cite{GRDB}. This
polytope has
four faces: two smooth triangles, one $A_2$-triangle, and one face
isomorphic to the polygon shown above.  The corresponding Laurent
polynomial is:
\[
F = x+y+z+x^{-4} y^{-2} z^{-1}+2 x^{-2} y^{-1}+4 x^{-1}
\]
It has period sequence:
\[
1, 0, 8, 0, 120, 0, 2240, 0, 47320, 0, \ldots
\]
and Picard--Fuchs operator:
\[
512 t^{4} D^{3} + 3072 t^{4} D^{2} + 5632 t^{4} D - 48 t^{2} D^{3} +
3072 t^{4} - 144 t^{2} D^{2} - 160 t^{2} D + D^{3} - 64 t^{2} 
\]
The Laurent polynomial $F$ is Hodge--Tate but is not a MP.  It is
extremal, and is of manifold type in the sense of
\S\ref{sec:mink-polyn-fano}, but is not mirror-dual to any
3-dimensional Fano manifold.
\end{exa}

\section{Minkowski polynomials and Fano \mbox{3-folds}}
\label{sec:mink-polyn-fano}

Recall that in 3 variables there are 165 Minkowski (classical) period
sequences and, correspondingly, 165 Picard--Fuchs operators. We write
$L_f=\sum t^k P_k(D)$ where $P_k(D)\in \CC[D]$ is a polynomial in $D$,
and denote by $L_f(0)=P_0(D)$ the operator at $t=0$.  It turns out
that, if $L_f$ is one of the 165 Minkowski Picard--Fuchs operators,
then $L_f(0)$ splits as a product of linear factors over the
rationals. We say that $L_f$ is of \emph{manifold type} if all the
roots are integers; otherwise we say that $L_f$ is of \emph{orbifold
  type}.  Exactly 98 of the Minkowski Picard--Fuchs operators are of
manifold type and we have verified, by direct computation of
invariants on both sides, that they mirror the 98 deformation families
of 3-dimensional Fano manifolds $X$ such that $-K_X$ is very ample.
It will be interesting to see if the Minkowski Picard--Fuchs operators
of orbifold type mirror Fano \emph{orbifolds}.

It is natural to ask what invariants of a Fano manifold $X$ can be
computed from the knowledge of the differential operator
$\widehat{Q}_X$ alone.  This is a subtle question \cite{EHX, vEvS},
but in the case of \mbox{3-folds} we have good numerical evidence for
the following:

\begin{ho}[Galkin, Golyshev, Iritani, van Straten] Let $X$ be a
  3-dimensional Fano manifold and let $J_X(t)$ and $J_X^0(t)$ be as
  defined above (in the proof of Theorem~\ref{thm:2}). Then:
  \[
  \lim_{t\to +\infty}
  \frac{J_X(t)}{J_X^0(t)}=\widehat{\Gamma}(T_X)
  \]  
  where the limit is taken as $t$ tends to $+\infty$ along the real
  axis.  The characteristic class $\widehat{\Gamma}(T_X)$ is defined
  in \cite{MR2483750,MR2553377}.
\end{ho}

We briefly mention a promising line of thought. Consider a
\mbox{3-fold} toric Gorenstein canonical singularity $X_\sigma$, so
that $\sigma =\RR_+ (\iota \, F)$ where $F\subset \ZZ^2$ is a a
lattice polygon and $\iota \colon \ZZ^2 \to \ZZ^3$ is an affine
embedding at height one as above.  According to \cite{MR1452429},
deformation components of the singularity correspond to Minkowski
decompositions of $F$.  This suggests that Minkowski polynomials $f$
with $\Newt f =P$ may correspond to \emph{smoothing components} of the
singular toric Fano \mbox{3-fold} $X$ with fan polytope $P$. It would
be nice to make this precise, and to interpret the Minkowski
polynomials in terms of holomorphic disk counts in the framework of
Hori, Gross--Siebert, or Kontsevich--Soibelman.

\section{Fano 4-folds?}
\label{sec:fano-4-folds}

In 4 dimensions, there are over $473$ million reflexive
polytopes. Building on the Kreuzer--Skarke classification
\cite{MR1894855}, we are now in the process of making a database of
facets and of computing their lattice Minkowski decompositions.  We
plan to classify: Minkowski polynomials (and more general low
ramification Laurent polynomials) in 4 variables; their period
sequences; and their Picard--Fuchs operators.  This will give a list
of candidate families of Fano \mbox{4-folds}, and we aim to: compute
the (conjectural) invariants of these Fano \mbox{4-folds} assuming
that they exist; and construct the Fano explicitly in many
cases. Eventually, we hope turn this story into a classification
theory.

\subsection*{Acknowledgements}

This research is supported by TC's Royal Society University Research
Fellowship; ERC Starting Investigator Grant number~240123; the
Leverhulme Trust; Kavli Institute for the Physics and Mathematics of
the Universe (WPI); World Premier International Research Center
Initiative (WPI Initiative), MEXT, Japan; Grant-in-Aid for Scientific
Research (10554503) from Japan Society for Promotion of Science and
Grant of Leading Scientific Schools (N.Sh. 4713.2010.1); and EPSRC
grant EP/I008128/1 ``Extremal Laurent Polynomials''.


\begin{thebibliography}{HKTY95}

\bibitem[Alt97]{MR1452429}
Klaus Altmann.
\newblock The versal deformation of an isolated toric {G}orenstein singularity.
\newblock {\em Invent. Math.}, 128(3):443--479, 1997.

\bibitem[Bat94]{MR1269718}
Victor~V. Batyrev.
\newblock Dual polyhedra and mirror symmetry for {C}alabi-{Y}au hypersurfaces
  in toric varieties.
\newblock {\em J. Algebraic Geom.}, 3(3):493--535, 1994.

\bibitem[Bea82]{MR664643}
Arnaud Beauville.
\newblock Les familles stables de courbes elliptiques sur {${\bf P}^{1}$}\
  admettant quatre fibres singuli\`eres.
\newblock {\em C. R. Acad. Sci. Paris S\'er. I Math.}, 294(19):657--660, 1982.


\bibitem[B]{Borel}
Borel, A. \emph{et al.}
\newblock Algebraic $D$-modules.
\newblock Perspectives in Mathematics volume 2, Academic Press Inc.,
Boston, MA, 1987.

\bibitem[BH]{BoHo}
Lev~A. Borisov and R.~Paul Horja.
\newblock On the better behaved version of the {GKZ} hypergeometric system.
\newblock arXiv:math.AG/1011.5720.

\bibitem[BK]{GRDB}
Gavin Brown and Alexander~M Kasprzyk.
\newblock The graded ring database homepage.
\newblock Online.
\newblock Access via
  \url{http://grdb.lboro.ac.uk/}.

\bibitem[CG07]{MR2276766}
Tom Coates and Alexander Givental.
\newblock Quantum {R}iemann-{R}och, {L}efschetz and {S}erre.
\newblock {\em Ann. of Math. (2)}, 165(1):15--53, 2007.

\bibitem[CG12]{galkin12}
John Alexander Cruz Morales and Sergey Galkin.
\newblock Upper bounds for mutations of potentials.
\newblock Preprint IPMU 12--0110, 2012.

\bibitem[Del71]{Deligne}
Pierre Deligne.
\newblock Travaux de {G}riffiths.
\newblock In {\em S\'eminaire {B}ourbaki. {V}ol. 1969/70: {E}xpos\'es 364--381,
  exp.\ n\textsuperscript{o} 376}, volume 180 of {\em Lecture Notes in
  Mathematics}, pages 213--237. Springer-Verlag, 1971.

\bibitem[Del89]{MR1012168}
P.~Deligne.
\newblock Le groupe fondamental de la droite projective moins trois points.
\newblock In {\em Galois groups over {${\bf Q}$} ({B}erkeley, {CA}, 1987)},
  volume~16 of {\em Math. Sci. Res. Inst. Publ.}, pages 79--297. Springer, New
  York, 1989.

\bibitem[EHX97]{EHX}
Tohru Eguchi, Kentaro Hori and Chuan-Sheng Xiong. 
\newblock Gravitational quantum cohomology.
\newblock {\em Internat. J. Modern Phys. A}, 12(9):1743--1782, 1997.

\bibitem[ES06]{vEvS}
Christian van Enckevort and Duco van Straten.
\newblock Monodromy calculations of fourth order equations of
Calabi--Yau type.
\newblock In {\em Mirror symmetry. V}, volume~38 of {\em AMS/IP Stud.\ Adv.\ Math.},
 pages 539--559. Amer.\ Math.\ Soc., Providence, RI, 2006.

\bibitem[Giv98]{MR1653024}
Alexander Givental.
\newblock A mirror theorem for toric complete intersections.
\newblock In {\em Topological field theory, primitive forms and related topics
  ({K}yoto, 1996)}, volume 160 of {\em Progr. Math.}, pages 141--175.
  Birkh\"auser Boston, Boston, MA, 1998.

\bibitem[GKZ08]{MR2394437}
I.~M. Gel{\cprime}fand, M.~M. Kapranov, and A.~V. Zelevinsky.
\newblock {\em Discriminants, resultants and multidimensional determinants}.
\newblock Modern Birkh\"auser Classics. Birkh\"auser Boston Inc., Boston, MA,
  2008.
\newblock Reprint of the 1994 edition.

\bibitem[Gol]{Gol}
Vasily Golyshev.
\newblock Spectra and strains.
\newblock arXiv:hep-th/0801.0432.

\bibitem[Gol07]{MR2306141}
Vasily~V. Golyshev.
\newblock Classification problems and mirror duality.
\newblock In {\em Surveys in geometry and number theory: reports on
  contemporary {R}ussian mathematics}, volume 338 of {\em London Math. Soc.
  Lecture Note Ser.}, pages 88--121. Cambridge Univ. Press, Cambridge, 2007.

\bibitem[GU10]{galkin10:_mutat}
Sergey Galkin and Alexandr\ Usnich.
\newblock Mutations of potentials.
\newblock Preprint IPMU 10--0100, 2010.

\bibitem[Gue08]{MR2391365}
Martin~A. Guest.
\newblock {\em From quantum cohomology to integrable systems}, volume~15 of
  {\em Oxford Graduate Texts in Mathematics}.
\newblock Oxford University Press, Oxford, 2008.

\bibitem[GZK89]{MR1011353}
I.~M. Gel{\cprime}fand, A.~V. Zelevinski{\u\i}, and M.~M. Kapranov.
\newblock Hypergeometric functions and toric varieties.
\newblock {\em Funktsional. Anal. i Prilozhen.}, 23(2):12--26, 1989.


\bibitem[HKTY95]{MR1316509}
S.~Hosono, A.~Klemm, S.~Theisen, and S.-T. Yau.
\newblock Mirror symmetry, mirror map and applications to {C}alabi-{Y}au
  hypersurfaces.
\newblock {\em Comm. Math. Phys.}, 167(2):301--350, 1995.

\bibitem[Iri09]{MR2553377}
Hiroshi Iritani.
\newblock An integral structure in quantum cohomology and mirror symmetry for
  toric orbifolds.
\newblock {\em Adv. Math.}, 222(3):1016--1079, 2009.

\bibitem[Isk77]{MR463151}
V.~A. Iskovskih.
\newblock Fano threefolds. {I}.
\newblock {\em Izv. Akad. Nauk SSSR Ser. Mat.}, 41(3):516--562, 717, 1977.

\bibitem[Isk78]{MR503430}
V.~A. Iskovskih.
\newblock Fano threefolds. {II}.
\newblock {\em Izv. Akad. Nauk SSSR Ser. Mat.}, 42(3):506--549, 1978.

\bibitem[Jan90]{MR1043451}
Uwe Jannsen.
\newblock {\em Mixed motives and algebraic {$K$}-theory}, volume 1400 of {\em
  Lecture Notes in Mathematics}.
\newblock Springer-Verlag, Berlin, 1990.
\newblock With appendices by S. Bloch and C. Schoen.

\bibitem[KKP08]{MR2483750}
L.~Katzarkov, M.~Kontsevich, and T.~Pantev.
\newblock Hodge theoretic aspects of mirror symmetry.
\newblock In {\em From {H}odge theory to integrability and {TQFT}
  tt*-geometry}, volume~78 of {\em Proc. Sympos. Pure Math.}, pages 87--174.
  Amer. Math. Soc., Providence, RI, 2008.

\bibitem[KS98]{MR1663339}
Maximilian Kreuzer and Harald Skarke.
\newblock Classification of reflexive polyhedra in three dimensions.
\newblock {\em Adv. Theor. Math. Phys.}, 2(4):853--871, 1998.

\bibitem[KS00]{MR1894855}
Maximilian Kreuzer and Harald Skarke.
\newblock Complete classification of reflexive polyhedra in four dimensions.
\newblock {\em Adv. Theor. Math. Phys.}, 4(6):1209--1230, 2000.

\bibitem[Man99]{MR1702284}
Yuri~I. Manin.
\newblock {\em Frobenius manifolds, quantum cohomology, and moduli spaces},
  volume~47 of {\em American Mathematical Society Colloquium Publications}.
\newblock American Mathematical Society, Providence, RI, 1999.

\bibitem[MM04]{MR2112566}
Shigefumi Mori and Shigeru Mukai.
\newblock Extremal rays and {F}ano 3-folds.
\newblock In {\em The {F}ano {C}onference}, pages 37--50. Univ. Torino, Turin,
  2004.

\bibitem[Prz07]{MR2412270}
Victor Przyjalkowski.
\newblock On {L}andau-{G}inzburg models for {F}ano varieties.
\newblock {\em Commun. Number Theory Phys.}, 1(4):713--728, 2007.

\bibitem[Sti07]{MR2306158}
Jan Stienstra.
\newblock G{KZ} hypergeometric structures.
\newblock In {\em Arithmetic and geometry around hypergeometric functions},
  volume 260 of {\em Progr. Math.}, pages 313--371. Birkh\"auser, Basel, 2007.

\bibitem[Tak89]{MR1022045}
Kiyohiko Takeuchi.
\newblock Some birational maps of {F}ano {$3$}-folds.
\newblock {\em Compositio Math.}, 71(3):265--283, 1989.

\end{thebibliography}
\pagestyle{plain}
\def\cprime{$'$}

\end{document}